\begin{document}

\title{Estimates of Norms on Krein Spaces\thanks{The first author is grateful to the National Institute of Technology Karnataka for the financial support. The present work of the second author was partially
		supported by National Board for Higher Mathematics (NBHM), Ministry of Atomic
		Energy, Government of India (Reference No.2/48(16)/2012/NBHM (R.P.)/R\&D 11/9133).}
}

%\titlerunning{Short form of title}        % if too long for running head

\author{Athira Satheesh K. \and	P. Sam Johnson \and K. Kamaraj %etc.
}

%\authorrunning{Short form of author list} % if too long for running head

\institute{Athira Satheesh K. \at
	Department of Mathematical and Computational Sciences, 
	National Institute of Technology Karnataka, Surathkal, Mangaluru 575 025, India\\
	\email{athirachandri@gmail.com}           %  \\
	%             \emph{Present address:} of F. Author  %  if needed
	\and
	P. Sam Johnson \at
	Department of Mathematical and Computational Sciences, 
	National Institute of Technology Karnataka, Surathkal, Mangaluru 575 025, India\\
	\email{sam@nitk.edu.in} 
	\and 
	K. Kamaraj \at
	Department of Mathematics, University  College of Engineering Arni, Anna University, Arni 632 326, India.\\
	\email{krajkj@yahoo.com}
}
\date{Received: date / Accepted: date}
% The correct dates will be entered by the editor

\maketitle

\begin{abstract}
	Various norms can be defined on a Krein space by choosing different underlying fundamental decompositions. Some estimates of norms on Krein spaces are  discussed and few results in Bognar's paper are generalized.
	\keywords{Krein space \and fundamental decomposition \and $J$-norm}  
	% \PACS{PACS code1 \and PACS code2 \and more}
	\subclass{	46C05 \and 46C20 }
\end{abstract}

\section{Introduction}

Let $\mathcal K$ be a complex vector space with a Hermitian sesquilinear form defined on it. Then we call $(\mathcal K,[.,.])$ an inner product space. An element $x\in \mathcal K$ is called neutral, positive, or negative if $[x,x]=0$, $[x,x]>0$, or $[x,x]<0$ respectively. If $\mathcal K$ contains positive as well as negative elements, then it is called an indefinite inner product space, otherwise it is called a semi-definite inner product space. We refer \cite{BOGNAR,AZIZOV} for basics on indefinite inner product  spaces.  The concept of indefinite inner product was first found in a paper on quantum field theory by Dirac in 1942 \cite{dirac}. Pontrjagin gave the mathematical interpretation of indefinite inner product. Bognar \cite{BOGNAR1}, Hansen \cite{HANSEN}, Langer \cite{LANGER} et al. have investigated the notion of norm in indefinite inner product spaces.

%\begin{definition}
An indefinite inner product space $(\mathcal K,[.,.])$ is decomposable if it can be written as an orthogonal direct sum of a  neutral subspace $\mathcal  K^0$, a positive definite subspace $\mathcal K^+ $ and a negative definite subspace $\mathcal  K^-$:
\begin{eqnarray}\label{e1} \mathcal  K=\mathcal K^0[\dot{+}]\mathcal K^+ [\dot{+}]\mathcal K^-. \end{eqnarray} Then (\ref{e1}) is known as a fundamental decomposition of $\mathcal K$. %The space $\mathcal K$ is non-degenerate if $\mathcal K^0=\{0\}.$
%\end{definition}

%\begin{definition}
An indefinite inner product space $(\mathcal K,[.,.])$ is a Krein space if it can be written as an orthogonal direct sum of a positive definite subspace $\mathcal K^+$ and a negative definite subspace $\mathcal K^-$ such that $(\mathcal K^+,[.,.])$ and $(\mathcal K^-,-[.,.])$ are Hilbert spaces. Let a fundamental decomposition of a Krein space $\mathcal K$ be given by
\begin{eqnarray}\label{e2}\mathcal K=\mathcal K^+[\dot{+}]\mathcal K^- \end{eqnarray}
and $P^\pm$ be the orthogonal  projections onto $\mathcal K^\pm$. The linear map 
\begin{eqnarray*} 
	J=P^+-P^-,
\end{eqnarray*} 
is called the fundamental symmetry corresponding to $(\ref{e2})$. Then \begin{gather*} (x,y)_J=[Jx,y] \end{gather*} 
is a positive definite inner  product on $\mathcal K$, called $J$-inner product corresponding to the fundamental decomposition (\ref{e2}). We can write
\begin{eqnarray}\label{eq1}
(x,x)_{J}=[Jx,x]=[(2P^+-I)x,x]=2[P^+x,P^+x]-[x,x].
\end{eqnarray}
%  It follows that $(x,x)_{J}\geq-[x,x]=|[x,x]|$ and $(x,x)_{J}=|[x,x]|$ if and only if $P^+x=0$. 
The corresponding norm (called $J$-norm) is denoted by 
\begin{gather*} \|x\|_J=(x,x)_J^\frac{1}{2}=[Jx,x]^\frac{1}{2}. \end{gather*}
%\end{definition}

Different fundamental decompositions induce different $J$-norms. Hence various norms can be defined on a Krein space by choosing different underlying fundamental decompositions.

A different fundamental decomposition of $\mathcal K$ say, $\mathcal K=\mathcal K_1^{+'}[\dot{+}]\mathcal K_2^{-'}$
makes the norm of an element $x$ larger than $|[x,x]|$. Roughly speaking, if the spaces $\mathcal K_1^{+'}$  and $\mathcal K_2^{-'}$ ``come closer'' to a neutral set $\mathcal K^0$, these norms in general be unbounded. It is interesting to observe that how the norm of a single element actually depends upon the choice of fundamental decomposition \cite{LANGER}. We end the section with some examples.  In the second section, some preliminary results are given which will be used in the sequel. The third section contains our main results concerning estimates of norms on Krein spaces.

\begin{example}\cite{DRITSCHEL}
	Minkowski space $M^{n+1}$ is defined as the set of $(n+1)$-dimensional column vectors $x=(x_1,x_2,\ldots,x_n,t_1)^t$ ($t$ indicates the transpose of a matrix)
	with complex entries and the indefinite inner product is defined by $$[x,y]=x_1\overline{y_1}+x_2\overline{y_2}+\dots+x_n\overline{y_n}+t_1 \overline{t_2}$$  where $x=(x_1,x_2,\ldots,x_n,t_1)^t$,  $ y=(y_1,y_2,\ldots,y_n,t_2)^t$ $\in M^{n+1}$. Then $M^{n+1}$  is a Krein space. A fundamental symmetry for the space is given by the matrix
	$$
	\begin{pmatrix}
	I_n&0\\
	0&-1
	\end{pmatrix},$$
	where $I_n$ denotes the identity matrix of order $n.$
\end{example}

\begin{example}
	Consider $\mathcal K=\ell_2$, the linear space of square-summable sequences, with $$[x,y]=\sum^\infty_{i=1} (-1)^ix_i \overline{y_i} \quad\text{for } \ x=(x_i)^\infty_{i=1}, \ y=(y_i)^\infty_{i=1}\in \mathcal K.$$ Let $\mathcal K^+ =\big\{(x_i)^\infty_{i=1}:x_i=0   \text{ if }  i  \text{ is odd}\big\}$  and $\mathcal K^- =\big\{(x_i)^\infty_{i=1}:x_i=0  \text{ if }   i \text{ is even}\big\}.$ 		
	Then $\mathcal K=\mathcal K^+[\dot{+}]\mathcal K^-$, where $\mathcal K^+$ and  $\mathcal K^-$ are complete with respect to the induced norm and hence $\mathcal K$ is a Krein space.
\end{example}
\begin{example} 
	Consider $\mathcal K=C[-1,1]$ the linear space of all complex-valued continuous functions
	defined on the interval $[-1,1]$  with $$[x,y]=\int_{-1}^{1}x(t)\overline{y(-t)}dt \quad \text{for } \  x,y\in \mathcal K.$$ Then $\mathcal K$ admits a fundamental decomposition $\mathcal K=\mathcal K^+[\dot{+}]\mathcal K^-$ where $\mathcal K^+$ and $\mathcal K^-$ the spaces of all continuous even and odd functions on $[-1,1]$ respectively,  are complete with respect to the induced norm and hence $\mathcal K$ is a Krein space.
\end{example}

\begin{example}
	\cite{Kaltenback} Let $\Omega$ be a set and $\Sigma$ be a $\sigma$-algebra on $\Omega$. Let $\mu_+$ and $\mu_-$ be two mutually singular positive measures defined on $\Sigma$. Set $\mu=\mu_++\mu_-$. Define
	\begin{gather*}     [f,g]=\int_{\Omega}f\overline{g}d\mu_+-\int_{\Omega}f\overline{g}d\mu_- \quad \text{for } \ f,g\in L^2(\mu).\end{gather*} Then $L^2(\mu)=L^2(\mu_+)[\dot{+}]L^2(\mu_-)$ forms a fundamental decomposition, since $(L^2(\mu_+),[.,.])$ and $(L^2(\mu_-),-[.,.])$ are Hilbert spaces. Thus $L^2(\mu)$ is a Krein space.
\end{example}

\section{Preliminaries}

\begin{theorem} \cite{AZIZOV}
	Let $\mathcal K$ be a Krein space. Then $\mathcal K$ has several fundamental decompositions with non-zero components.  All norms induced by different fundamental decompositions are equivalent and hence they induce the same topology.
\end{theorem}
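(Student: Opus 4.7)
The plan is to treat the theorem in two independent pieces: existence of many fundamental decompositions with non-zero components, and equivalence of all induced $J$-norms.

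For the existence part, I would begin with one fundamental decomposition $\mathcal{K}=\mathcal{K}^+[\dot{+}]\mathcal{K}^-$ whose components are both non-zero and produce new decompositions by a hyperbolic rotation in a single hyperbolic plane. Choose $e\in\mathcal{K}^+$ and $f\in\mathcal{K}^-$ with $[e,e]=1$ and $[f,f]=-1$, and for each $\theta\neq 0$ put
\[
e_\theta=e\cosh\theta+f\sinh\theta, \qquad f_\theta=e\sinh\theta+f\cosh\theta.
\]
A direct computation yields $[e_\theta,e_\theta]=1$, $[f_\theta,f_\theta]=-1$ and $[e_\theta,f_\theta]=0$, while $\mathrm{span}\{e_\theta,f_\theta\}=\mathrm{span}\{e,f\}$. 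Replacing the lines spanned by $e$ and $f$ inside the original decomposition by those spanned by $e_\theta$ and $f_\theta$, keeping the $[\cdot,\cdot]$-orthogonal complement of $\mathrm{span}\{e,f\}$ intact, produces a new fundamental decomposition distinct from the initial one for every $\theta\neq 0$.

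For the norm equivalence, let $\mathcal{K}=\mathcal{K}_i^+[\dot{+}]\mathcal{K}_i^-$ ($i=1,2$) be two fundamental decompositions with symmetries $J_i$ and $J$-norms $\|\cdot\|_i$; each makes $\mathcal{K}$ a Hilbert space by the definition of a Krein space. I would apply the closed graph theorem to the identity map $\iota:(\mathcal{K},\|\cdot\|_1)\to(\mathcal{K},\|\cdot\|_2)$. Suppose $x_n\to x$ in $\|\cdot\|_1$ and $x_n\to y$ in $\|\cdot\|_2$. Since $J_1$ is an isometric involution in $\|\cdot\|_1$, Cauchy-Schwarz gives
\[
|[x_n-x,z]|=|(J_1(x_n-x),z)_{J_1}|\le\|x_n-x\|_1\,\|z\|_1\longrightarrow 0
\]
for every $z\in\mathcal{K}$, and analogously $[x_n,z]\to[y,z]$ via $J_2$. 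Hence $[x-y,z]=0$ for all $z\in\mathcal{K}$. Writing $x-y=u^++u^-$ with $u^{\pm}\in\mathcal{K}_1^{\pm}$ and testing against arbitrary $z^{\pm}\in\mathcal{K}_1^{\pm}$, the Hilbert space structure of $(\mathcal{K}_1^{\pm},\pm[\cdot,\cdot])$ forces $u^{\pm}=0$, so $x=y$. The graph of $\iota$ is closed, whence the closed graph theorem yields $\|\cdot\|_2\le C\|\cdot\|_1$ for some $C>0$, and the bounded inverse theorem (or symmetry in the roles of the two decompositions) gives the reverse estimate.

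The most delicate step is the non-degeneracy argument that concludes $x=y$ in the closed graph verification: it is precisely the Hilbert structure of the two definite components that excludes pathological elements annihilated by $[\cdot,\cdot]$. Once this is in hand, the existence part is a routine construction with hyperbolic/angular operators, and the rest of the norm equivalence reduces to the standard open mapping machinery in Banach spaces.
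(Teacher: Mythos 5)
The paper does not prove this statement at all --- it is quoted from \cite{AZIZOV} as a preliminary --- so there is no in-paper argument to compare against; your proposal has to stand on its own, and it essentially does. The norm-equivalence half is the standard and correct argument: both $J_i$-norms make $\mathcal K$ a Hilbert space, the estimate $|[u,z]|=|(J_iu,z)_{J_i}|\le\|u\|_i\|z\|_i$ shows that $\|\cdot\|_i$-convergence implies weak convergence against $[\cdot,\cdot]$, and non-degeneracy of the Krein space (which you correctly extract from positive/negative definiteness of the two components of a fixed decomposition) closes the graph; the closed graph theorem then gives one inequality and symmetry the other. The existence half via the hyperbolic rotation $e_\theta=e\cosh\theta+f\sinh\theta$, $f_\theta=e\sinh\theta+f\cosh\theta$ is also correct and is in fact the same mechanism the paper illustrates concretely in its two-dimensional Example with $\mathcal K_n^+=\mathrm{span}\{(n,1)\}$. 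Two small points you should make explicit: (i) the claim only makes sense when $[\cdot,\cdot]$ is genuinely indefinite, i.e.\ when the initial decomposition has both components non-zero --- for a definite space the fundamental decomposition is unique, so your standing assumption that such $e$ and $f$ exist is exactly the implicit hypothesis of the theorem and should be stated; (ii) you should record why $\mathcal K^+_\theta=(\mathcal K^+\ominus\mathrm{span}\{e\})[\dot{+}]\mathrm{span}\{e_\theta\}$ is again intrinsically a Hilbert space (hence the new decomposition is fundamental, not merely algebraic) --- this is immediate because $[u,e_\theta]=0$ for $u\in\mathcal K^+\ominus\mathrm{span}\{e\}$, so the new component is an orthogonal sum of a closed subspace of a Hilbert space and a one-dimensional positive line, but the sentence is worth writing. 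With those two remarks added, the proof is complete and arguably more self-contained than the paper, which simply defers to Azizov.
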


\begin{theorem}\cite{BOGNAR}
	Let $(\mathcal K,[.,.])$ be a Krein space. Then the following are equivalent:
	\begin{enumerate}
		\item[(1)] There exists a fundamental decomposition of $\mathcal K$.
		\item[(2)] There exists a maximal uniformly positive ortho-complemented subspace.
		\item[(3)] There exists a maximal uniformly negative ortho-complemented subspace.
		% \item[(4)] There exists a mapping $J$ in $K$ such that $J=J^*=J^{-1}$.
	\end{enumerate}
\end{theorem}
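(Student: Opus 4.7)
The plan is to prove the equivalence $(1)\Leftrightarrow(2)$ directly, and then to deduce $(1)\Leftrightarrow(3)$ by the entirely symmetric argument, equivalently by applying the first equivalence to the anti-space $(\mathcal{K},-[\cdot,\cdot])$, which is again a Krein space with the roles of positive and negative subspaces interchanged.

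For $(1)\Rightarrow(2)$, I would take $\mathcal{K}^+$ from a given fundamental decomposition $\mathcal{K}=\mathcal{K}^+[\dot{+}]\mathcal{K}^-$ as the required subspace. Uniform positivity is immediate from the very definition of a Krein space, since $(\mathcal{K}^+,[\cdot,\cdot])$ is already a Hilbert space; ortho-complementation is witnessed by $\mathcal{K}^-$; and maximality follows because any strictly larger positive subspace would contain some $y=y^++y^-$ with $y^-\in\mathcal{K}^-\setminus\{0\}$, and then, together with $y^+\in\mathcal{K}^+$, it would contain the non-positive vector $y^-$.

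For the substantive direction $(2)\Rightarrow(1)$, I would set $\mathcal{L}^-:=(\mathcal{L}^+)^{[\perp]}$ and argue that $\mathcal{K}=\mathcal{L}^+[\dot{+}]\mathcal{L}^-$ is a fundamental decomposition. The ortho-complementation hypothesis supplies the direct sum at once. To obtain negative definiteness of $\mathcal{L}^-$, I would first rule out positive elements: any positive $z\in\mathcal{L}^-$ would make $\mathcal{L}^+\oplus\mathrm{span}\{z\}$ a strictly larger uniformly positive subspace, since the indefinite form decouples on this $[\cdot,\cdot]$-orthogonal sum, contradicting the maximality of $\mathcal{L}^+$. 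Once $\mathcal{L}^-$ is negative semidefinite, a Cauchy--Schwarz argument for the form $-[\cdot,\cdot]$ on $\mathcal{L}^-$ shows that any neutral $z\in\mathcal{L}^-$ is $[\cdot,\cdot]$-orthogonal to every element of $\mathcal{L}^-$; combined with $[z,\mathcal{L}^+]=0$, this places $z$ in the radical of $\mathcal{K}$, which is trivial (every Krein space is non-degenerate), so $z=0$.

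The main obstacle will be upgrading ``negative definite'' to ``uniformly negative'', i.e., showing that $(\mathcal{L}^-,-[\cdot,\cdot])$ is actually a Hilbert space. The intended route is to invoke the ambient Krein topology guaranteed by the first theorem of the Preliminaries section: since $\mathcal{L}^+$ is the range of the bounded idempotent associated with the ortho-decomposition, it is closed in any $J$-norm on $\mathcal{K}$, and hence $\mathcal{L}^-$ is also closed and therefore complete in the $J$-norm. A closed-graph argument comparing the restricted $J$-norm with the intrinsic norm $(-[\cdot,\cdot])^{1/2}$ then yields their equivalence on $\mathcal{L}^-$, delivering the required Hilbert-space structure and completing the construction of the fundamental decomposition.
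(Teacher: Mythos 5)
The paper itself offers no proof of this statement: it is quoted from Bognar in the Preliminaries and used as a black box, so your argument can only be judged on its own terms. The overall architecture is the standard one and most of it is sound: $(1)\Rightarrow(2)$ is correct (with the small remark that maximality need only be checked among \emph{uniformly} positive subspaces, which your argument covers a fortiori since any strictly larger subspace already contains a negative vector); the anti-space reduction for $(3)$ is fine; and in $(2)\Rightarrow(1)$ the exclusion of positive and of neutral vectors from $\mathcal L^-=(\mathcal L^+)^{[\perp]}$ is argued correctly (for the positive case you should note explicitly that adjoining an orthogonal positive vector to a uniformly positive subspace keeps it \emph{uniformly} positive, which is true but needs the one-line estimate).

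The genuine gap is exactly where you anticipate it: upgrading ``negative definite'' to ``uniformly negative''. Your closed-graph argument is circular. The closed graph (or open mapping) theorem compares two \emph{complete} norms, and completeness of $\mathcal L^-$ in the intrinsic norm $(-[\cdot,\cdot])^{1/2}$ is precisely the conclusion you are after; $J$-closedness only gives completeness in the $J$-norm, and the inequality that comes for free from Cauchy--Schwarz in the $J$-inner product is the trivial direction $-[z,z]\le\|z\|_J^2$. Moreover, ``$J$-closed and negative definite'' genuinely does not imply ``uniformly negative'': the graph of a contraction $K\colon\mathcal K^-\to\mathcal K^+$ with $\|Kx\|<\|x\|$ for all $x\ne0$ but $\|K\|=1$ is a closed, negative definite subspace that is not uniformly negative. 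What saves the theorem is the maximality \emph{and} uniform positivity of $\mathcal L^+$: represent $\mathcal L^+$ by its angular operator, $\mathcal L^+=\{x^++Kx^+:x^+\in\mathcal K^+\}$ with $K$ everywhere defined on $\mathcal K^+$ (this is what maximality gives) and $\|K\|<1$ (this is what uniform positivity gives); then $(\mathcal L^+)^{[\perp]}=\{K^*x^-+x^-:x^-\in\mathcal K^-\}$, and a direct computation yields $-[z,z]\ge\frac{1-\|K\|^2}{1+\|K\|^2}\,\|z\|_J^2$ there. That explicit inequality, not a closed-graph argument, is what delivers uniform negativity and hence the Hilbert-space structure of $\mathcal L^-$.
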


%\begin{proof}
%$(1)\Leftrightarrow (2)\Leftrightarrow(3)$ follow from  Theorem 7.1 in section $\Romannum{5}$ of \cite{bognar}.

%(1)$\Rightarrow$(4) The fundamental symmetry corresponding to a given fundamental decomposition satisfies (4).

%(4)$\Rightarrow$(1) Let $x\in K$. Then any $x \in K$ can be written as $x=\frac{x+Jx}{2}+\frac{x-Jx}{2}$, which implies $K=R(I+J)+R(I-J)$. Orthogonality of the subspaces follows from 
%\begin{equation*}
%\begin{split}
%[x+Jx,x-Jx]&=[x,x]+[Jx,x]-[x,Jx]-[Jx,Jx]\\&=[x,x]+[Jx,x]-[Jx,x]-[x,x]=0,
%\end{split}
%\end{equation*}
%using the given condition.

%\begin{que}
%Prove the positivite and negativite definiteness of $R(I+J)$ and $R(I-J)$ respectively.
%\end{que}
%\end{proof}

\begin{example}\label{eg1}
	Let $\mathcal K$ be a two-dimensional vector space with basis $\{e_1,e_2\}$ and an indefinite inner product defined by $[e_1,e_1]=1,[e_2,e_2]=-1$ and $[e_1,e_2]=0$. If we take $Y=\text{span}\{e_1\}$, then it is a maximal uniformly positive definite subspace and hence there exists a fundamental decomposition of $\mathcal K$ with $\mathcal K^+=Y$ and $\mathcal K^-=\text{span}\{e_2\}$. Choosing $\mathcal K_n^+=\text{span}\{(n,1)\}$ and $ \mathcal K_n^-=\text{span}\{(1,n)\}$ where $n>1$, we get several fundamental decompositions. The corresponding fundamental symmetries  $J_n$ are given by $$
	J_n= \begin{pmatrix}
	\frac{n^2+1}{n^2-1}&\frac{-2n}{n^2-1}\\
	\frac{2n}{n^2-1}& \frac{-(n^2+1)}{n^2-1}
	\end{pmatrix}.
	$$
	Here we can see that the fundamental symmetries  $J_n$ satisfy $J_n^2=I$, $[J_nx,y]=[x,J_ny]$ and $[J_nx,J_ny]=[x,y]$ for all $x,y \in \mathcal K$.
\end{example}

\section{Main Results}
\begin{theorem}\cite{LANGER}\label{th0}
	Assume that $\mathcal K$ is a Krein space such that $[.,.]$ is indefinite and let $x\in \mathcal K$, $x\ne 0$. Then the following holds.
	\begin{enumerate}
		\item[(i)]  If $[x, x]\ne 0$, then 
		\begin{equation}
		\Big\{\|x\|_J:\text{J is a fundamental symmetry}\Big\}=[|[x,x]|^\frac{1}{2},\infty).
		\end{equation}
		Moreover,
		\begin{equation*} \|x\|_J=|[x,x]|^\frac{1}{2}
		\quad\text{if and only if}\quad x\in \mathcal  K^+_{J} \cup \mathcal K^-_{J},
		\end{equation*}
		where $\mathcal K=\mathcal K^+_{J}[\dot{+}]\mathcal K^-_{J}$ is the fundamental decomposition associated with $J$.
		\item[(ii)]  If $[x,x]=0$, then
		\begin{equation*}
		\Big\{\|x\|_J:\text{J is a fundamental symmetry}\Big\}=(0,\infty).
		\end{equation*}
	\end{enumerate}
\end{theorem}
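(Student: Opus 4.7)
The plan rests on a single decomposition identity: for any fundamental decomposition $\mathcal{K}=\mathcal{K}^+_J[\dot{+}]\mathcal{K}^-_J$ with projections $P^\pm$, setting $a=[P^+x,P^+x]\ge 0$ and $b=-[P^-x,P^-x]\ge 0$ gives
\[
[x,x]=a-b,\qquad \|x\|_J^2=[Jx,x]=a+b,
\]
so that $\|x\|_J^2-|[x,x]|=2\min(a,b)$. This immediately yields the lower bound $\|x\|_J\ge|[x,x]|^{1/2}$ in (i), with equality iff $\min(a,b)=0$. Since $\mathcal{K}^+_J$ is positive definite and $\mathcal{K}^-_J$ negative definite, $a=0\Leftrightarrow x\in\mathcal{K}^-_J$ and $b=0\Leftrightarrow x\in\mathcal{K}^+_J$; the sign of $[x,x]=a-b$ then selects the correct alternative, giving the stated equality characterisation.

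For the surjectivity statements I would work inside a two-dimensional subspace and apply a hyperbolic rotation. In case (i) with $[x,x]>0$, start from any reference decomposition $\mathcal{K}=\mathcal{K}^+_0[\dot{+}]\mathcal{K}^-_0$ with $x\in\mathcal{K}^+_0$ (which exists because the one-dimensional uniformly positive subspace $\mathrm{span}\{x\}$ extends to a maximal uniformly positive subspace, by the theorem recalled in Section~2) and pick $w\in\mathcal{K}^-_0$ with $[w,w]=-1$. Writing $\hat{x}=x/[x,x]^{1/2}$, for each $\alpha\in\mathbb{R}$ the rotated pair
\[
\hat{x}_\alpha=\cosh(\alpha)\hat{x}+\sinh(\alpha)w,\qquad w_\alpha=\sinh(\alpha)\hat{x}+\cosh(\alpha)w
\]
still satisfies $[\hat{x}_\alpha,\hat{x}_\alpha]=1$, $[w_\alpha,w_\alpha]=-1$, $[\hat{x}_\alpha,w_\alpha]=0$, so its span defines a new fundamental decomposition of the nondegenerate plane $\mathrm{span}\{x,w\}$; extending by any fixed fundamental decomposition of the $[\cdot,\cdot]$-orthogonal complement of this plane produces a fundamental symmetry $J_\alpha$ on the whole Krein space. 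A short expansion of $x$ in the new basis gives $\|x\|_{J_\alpha}^2=[x,x]\cosh(2\alpha)$, which continuously sweeps $[|[x,x]|^{1/2},\infty)$ as $\alpha\ge 0$; the case $[x,x]<0$ is symmetric. For (ii), neutrality forces $x=u+v$ with $u\in\mathcal{K}^+_0$ and $v\in\mathcal{K}^-_0$ both nonzero and $[u,u]=-[v,v]=c>0$; running the same hyperbolic rotation in the plane $\mathrm{span}\{u,v\}$ produces, by an analogous calculation, $\|x\|_{J_\alpha}^2=2ce^{-2\alpha}$, continuously sweeping $(0,\infty)$ as $\alpha$ varies over $\mathbb{R}$.

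The main obstacle is justifying that the one-parameter rotation inside the chosen plane genuinely delivers fundamental decompositions of the \emph{whole} space without perturbing $\|x\|_{J_\alpha}$. Two points require care: first, that the plane $\mathrm{span}\{\hat{x},w\}$ (respectively $\mathrm{span}\{u,v\}$) is nondegenerate, so its $[\cdot,\cdot]$-orthogonal complement is itself a Krein space on which a fixed fundamental decomposition can be prescribed and patched in; second, in case (i), that a reference decomposition with $x$ in the correct definite summand really exists. Both points reduce to the characterisation of fundamental decompositions via maximal uniformly definite orthocomplemented subspaces recalled in Section~2. Once these are in place, the conclusion is a one-line intermediate value argument applied to the explicit continuous function $\alpha\mapsto\|x\|_{J_\alpha}$.
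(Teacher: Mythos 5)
Your proposal is correct, and for the range statement it takes a genuinely different (and cleaner) route than the construction the paper relies on. The paper cites this theorem from Langer and reproduces the relevant construction inside the proof of Theorem \ref{th1}: there, after locating a neutral vector $z$ on the segment joining $x$ to an orthogonal vector $y$ of opposite sign via an intermediate value argument, the positive axis is tilted along $v(t)=ty+(1-t)z$, producing the rational expression $(x,x)_{J_t}=\frac{2(1-t)^2|[x,z]|^2}{t^2[y,y]+2t(1-t)[y,z]}-[x,x]$ whose range is then analysed through a quadratic equation in $t$. You instead parametrize the tilt by a hyperbolic rotation of the orthonormal pair $\{\hat{x},w\}$ (respectively $\{u,v\}$ in the neutral case), which yields the closed forms $\|x\|_{J_\alpha}^2=[x,x]\cosh(2\alpha)$ and $\|x\|_{J_\alpha}^2=2ce^{-2\alpha}$; the range statements then fall out by inspection rather than by solving for $t$. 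The two constructions are structurally the same --- fix a fundamental decomposition outside a non-degenerate two-dimensional plane and rotate the definite axes inside it, with the patching justified by the maximal uniformly definite subspace characterisation recalled in Section 2 --- but your parametrization avoids both the intermediate value step and the discriminant computation. You also supply an explicit proof of the lower bound and of the equality characterisation via $[x,x]=a-b$, $\|x\|_J^2=a+b$ with $a=[P^+x,P^+x]$, $b=-[P^-x,P^-x]$; the paper only records the equivalent identity $(x,x)_J=2[P^+x,P^+x]-[x,x]$ in equation (\ref{eq1}) and does not prove this part at all. One small point worth making explicit in case (ii): both components $u,v$ of a non-zero neutral $x$ are automatically non-zero because $[u,u]=-[v,v]$ and each summand is definite, which is what licenses $c>0$.
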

\begin{theorem}\label{th1}
	
	Assume that $(\mathcal K,[.,.])$ is a Krein space and let $0\neq x\in \mathcal K.$
	\begin{enumerate}
		\item[(a)] If $[x,x]\neq 0$,  then for each real number $a>|[x,x]|^\frac{1}{2}$   there exists a fundamental symmetry $J_a$  such that $\|x\|_{J_a}=a$.
		\item[(b)] If $[x,x]=0$,  then for each positive real number a there exists a fundamental symmetry $J_a$  such that $\|x\|_{J_a}=a$.
\end{enumerate}\end{theorem}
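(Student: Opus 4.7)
The plan is to reduce the theorem to a two-dimensional Krein subspace of $\mathcal{K}$ containing $x$, where an explicit one-parameter family of fundamental decompositions can be written down using angular-operator ideas. First I would fix any reference fundamental decomposition $\mathcal{K}=\mathcal{K}^{+}[\dot{+}]\mathcal{K}^{-}$ with associated projections $P^{\pm}$ and set $x^{\pm}=P^{\pm}x$. Since the form is assumed indefinite, both $\mathcal{K}^{+}$ and $\mathcal{K}^{-}$ are non-trivial, so after possibly adjoining a unit vector from whichever of $\mathcal{K}^{+},\mathcal{K}^{-}$ is missing from $\{x^{+},x^{-}\}$ one obtains a two-dimensional Krein subspace $M\subseteq\mathcal{K}$ containing $x$ and spanned by one positive and one negative vector of the reference decomposition. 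A short calculation shows that the Krein-orthogonal complement splits as $M^{\perp}=(M^{\perp}\cap\mathcal{K}^{+})[\dot{+}](M^{\perp}\cap\mathcal{K}^{-})$; gluing any fundamental decomposition of $M$ to this one of $M^{\perp}$ produces a fundamental decomposition of $\mathcal{K}$, and because $x\in M$ the resulting $J$-norm of $x$ coincides with its norm computed inside $M$. This reduces both statements to the two-dimensional case.

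In $M$, I would pick a Krein-orthonormal basis $\{e_{1},e_{2}\}$ with $[e_{1},e_{1}]=1$ and $[e_{2},e_{2}]=-1$ and, after adjusting phases, write $x=\alpha e_{1}+\beta e_{2}$ with $\alpha,\beta\geq 0$. For each $k\in\mathbb{C}$ with $|k|<1$ set
\begin{equation*}
L_{k}^{+}=\mathrm{span}\{e_{1}+ke_{2}\},\qquad L_{k}^{-}=\mathrm{span}\{\bar{k}e_{1}+e_{2}\}.
\end{equation*}
Direct computation shows $[e_{1}+ke_{2},e_{1}+ke_{2}]=1-|k|^{2}>0$, $[\bar{k}e_{1}+e_{2},\bar{k}e_{1}+e_{2}]=|k|^{2}-1<0$, and $[e_{1}+ke_{2},\bar{k}e_{1}+e_{2}]=0$, so $M=L_{k}^{+}[\dot{+}]L_{k}^{-}$ is a fundamental decomposition with fundamental symmetry $J_{k}$. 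Solving the coordinate change and evaluating $\|x\|_{J_{k}}^{2}=[J_{k}x,x]$ leads to
\begin{equation*}
\|x\|_{J_{k}}^{2}=\frac{|\alpha-\bar{k}\beta|^{2}+|\beta-k\alpha|^{2}}{1-|k|^{2}}.
\end{equation*}

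Restricting to real $k\in(-1,1)$ this is a continuous rational function $\varphi(k)$ that blows up as $|k|\to 1^{-}$. In case (a), with $[x,x]=\alpha^{2}-\beta^{2}\neq 0$, a direct computation gives $\varphi(\beta/\alpha)=\alpha^{2}-\beta^{2}$ when $\alpha>\beta$ and $\varphi(\alpha/\beta)=\beta^{2}-\alpha^{2}$ when $\beta>\alpha$, so $\min\varphi=|[x,x]|$; the intermediate value theorem then shows the range of $\varphi$ contains $[\,|[x,x]|,\infty)$, producing a fundamental symmetry $J_{a}$ with $\|x\|_{J_{a}}=a$ for every $a>|[x,x]|^{1/2}$. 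In case (b), $\alpha=\beta$ and $\varphi$ simplifies to $\frac{2\alpha^{2}(1-k)}{1+k}$, whose range over $(-1,1)$ is $(0,\infty)$, giving the required $J_{a}$ for every $a>0$. The main obstacle lies in the reduction step: one must verify that the chosen $M$ is a genuinely non-degenerate two-dimensional Krein subspace, that $M^{\perp}$ splits along the reference decomposition as claimed, and that the concatenated decomposition meets the definition of a fundamental decomposition of $\mathcal{K}$. Once this reduction is cleanly in place, the remaining two-dimensional analysis is an elementary continuity argument.
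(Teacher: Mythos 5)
Your proposal is correct and follows essentially the same strategy as the paper: fix a reference fundamental decomposition, isolate a two-dimensional non-degenerate subspace containing $x$ spanned by one positive and one negative vector, tilt the positive axis inside it by a one-parameter family while leaving the orthogonal complement's splitting untouched, and compute $\|x\|_J^2$ as an explicit function of the parameter. The only substantive difference is cosmetic: your angular-operator parameterization $k\mapsto \mathrm{span}\{e_1+ke_2\}$ yields a single formula covering both cases (a) and (b) and is settled by the intermediate value theorem, whereas the paper interpolates through a neutral vector $z$, writes $v(t)=ty+(1-t)z$, and solves a quadratic in $t$ to hit the prescribed value.
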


\begin{proof}
	(a) Let  $[x,x] <0$ .
	Let $\mathcal K=\mathcal M^+[\dot{+}]\mathcal M^-$  be a fundamental decomposition such that $x\in \mathcal M^-$. Choose $0\neq y\in \mathcal M^+$ and let $\mathcal L^+$ and $\mathcal  L^-$ be subspaces such that
	\begin{align*}\mathcal  M^+&=\mathcal  L^+[\dot{+}]\text{span}\{y\} &   \mathcal  M^-= \mathcal L^-[\dot{+}]\text{span}\{x\}.
	\end{align*}
	Consider 
	$u(s)=sy+(1-s)x, $  $s\in [0,1].$
	We have $[u(0),u(0)]<0, [u(1),u(1)]>0$ and $[.,.]$ is continuous. Hence there exists $s_0\in(0,1)$ such that $[u(s_0),u(s_0)]=0.$ Let $z=u(s_0)$, then 
	\begin{align*}
	[z,z]&=0, &  [y,z]&=s_0[y,y]>0,& [z,x]&=(1-s_0)[x,x]<0.
	\end{align*}
	Let $v(t)=ty+(1-t)z$, $t\in (0,1],$ which is a positive element for $t\in (0,1].$ Now set $$\mathcal K^+_{t} =\mathcal  L^+ [\dot{+}] \text{span}\{v(t)\}$$ since $y $ and $z$ are orthogonal to $\mathcal  L^+$. Thus the orthogonal projection $P^+_{t}$ onto $\mathcal K^+_{t}$ can be written as 
	$P^+_{t}=P_{\mathcal L_+}+P_{v(t)}$, where $P_{\mathcal L_+}$ is the orthogonal projection onto $\mathcal L_+$ and $P_{v(t)}$ is the orthogonal projection onto $\text{span}\{v(t)\}$. We also have \begin{eqnarray}P_{v(t)}u=\frac{[u,v(t)]}{[v(t),v(t)]}v(t)
	\end{eqnarray}for any $u \in \mathcal K$.
	Let $w(t)$ be a non-zero element in $\text{span}\{y,x\}$ which is orthogonal to $v(t)$ and hence negative. With $\mathcal K^-_{t} = \mathcal L^+ [\dot{+}] \text{span}\{w(t)\}$, we have a fundamental decompositon $\mathcal K=\mathcal K^+_{t}[\dot{+}]\mathcal K^-_{t}$ and a corresponding fundamental symmetry $J_t=2P^+_{t}-I$. Now we get 
	\begin{eqnarray}
	[P^+_{t}x,P^+_{t}x]= [P^+_{t}x,x]=\frac{|[x,v(t)]|^2}{[v(t),v(t)]}=\frac{(1-t)^2|[x,z]|^2}{t^2[y,y]+2t(1-t)[y,z]}.
	\end{eqnarray}
	
\noindent	From equation (\ref{eq1}) we get,
	
	\begin{eqnarray}\label{eq2}
	(x,x)_{J_t}=\frac{2(1-t)^2|[x,z]|^2}{t^2[y,y]+2t(1-t)[y,z]}-[x,x].
	\end{eqnarray}
	The construction of $(x,x)_{J_t}$ in equation (\ref{eq2}) is taken from the proof of the Theorem (\ref{th0}). The details are given for the sake of completeness of the proof. 
	
	As $t$ varies in (0,1], $(x,x)_{J_t} $ takes all values in 
	$[|[x,x]|,\infty)$. Thus $\|x\|_{J_t}$ takes all values in  $[|[x,x]|^\frac{1}{2},\infty)$. Let $a\in[|[x,x]|^\frac{1}{2},\infty$) be such that $a^2=b>|[x,x]|$. Now let us try to find $t\in(0,1]$
	for which $(x,x)_{J}=b$ so that  $\|x\|_{J}=a$.
	
\noindent 	From (\ref{eq2}) and $(x,x)_{J}=b$ we get, 
	\begin{eqnarray}\label{eq3}
	\frac{2(1-t)^2|[x,z]|^2}{t^2[y,y]+2t(1-t)[y,z]}-[x,x]=b.
	\end{eqnarray}
	We have  $[x,x]<0$, $[y,y]>0$ and $[y,z]>0$. So let $h=[y,y]$. Replacing $y$ by $\frac{y}{\sqrt{h}}$ we get $[y,y]=1.$ Now set $A=|[x,z]|^2$, $B=[y,y]=1$, $C=[y,z]$, $D=[x,x].$ Thus from (\ref{eq2}) we get  $\frac{2(1-t)^2A}{t^2+2t(1-t)C}-D=b$ which implies
	$2A(1-2t+t^2)=(b+D) (t^2-2Ct^2+2Ct)$
	so that 
	\begin{eqnarray*} 
		t^2[(b+D)(1-2C)-2A]+t[(b+D)2C+4A]-2A=0,
	\end{eqnarray*}  
	which is a quadratic equation in $t$ whose discriminant is $4C^2(b+D)^2+8A(b+D)$, which is positive as $ b+D$ and $A$ are positive. Thus there exists $t\in (0,1]$ such that it satisfies equation (\ref{eq3}). Let us denote it by $t_b$.
	% Now let $v(t_b)=t_by+(1-t_b)c$.
	Then the subspaces
	\begin{eqnarray*} 
		\mathcal  K^+_{t}=  \mathcal L^+[\dot{+}]\text{span}\{v(t_b)\}, \  \mathcal K^-_{t}= \mathcal L^-[\dot{+}]\text{span}\{w(t_b)\} 
	\end{eqnarray*} 
	give a fundamental symmetry  corresponding to $t_b$. We denote it by $J_a$ and hence we see that  $(x,x)_{J_a}=b$ and $\|x\|_{J_a}=a$.\\
	Let $[x,x]>0 $. Then choose a fundamental decomposition $ \mathcal K= \mathcal M^+[\dot{+}] \mathcal M^-$ such that $x\in  \mathcal M^+$ and continue the proof as discussed above.
	
	(b) Let $[x,x]=0$. 
	Let $y$ be another neutral element that satisfies $[x,y]=1$. Define $u=\frac{1}{\sqrt{2}}(x+y)$, $v=\frac{1}{\sqrt{2}}(x-y)$. Then $x=\frac{1}{\sqrt{2}}(u+v)$, $[u,u]=1$, $[v,v]=-1$ and $[u,v]=0$. 
	Let $  \mathcal K= \mathcal M^+[\dot{+}] \mathcal M^-$ be a fundamental decomposition such that $ \mathcal M^+= \mathcal L^+[\dot{+}]\text{span}\{u\}$, $ \mathcal M^-= \mathcal L^-[\dot{+}]\text{span}\{v\}$ with some subspaces $ \mathcal L^\pm$. Set 
	\begin{eqnarray*} 
		w(t)=u+tv, t\in(-1,1).
	\end{eqnarray*} 
	Then $[w(t),w(t)]=(1-t^2)>0$, $t\in(-1,1)$. Hence $ \mathcal K_{t,+}= \mathcal L^+[\dot{+}]\text{span}\{w(t) \}$ is a maximal uniformly positive subspace. Now the projection $P_{t,+}$ onto $ \mathcal K_{t,+}$ can be written as $P_{t,+}=P_{ \mathcal L_+}+P_{w(t)}$ and we get 
	\begin{gather*}
	[P_{t,+}x,P_{t,+}x]= [P_{t,+}x,x]=\frac{|[x,w(t)]|^2}{[w(t),w(t)]}=\frac{1-t}{2(1+t)},
	\end{gather*}
	which takes all the values in $(0,\infty)$ if $t$ varies in $(-1,1)$. Let $a\in (0,\infty$) be such that $a^2=b.$ Thus from (\ref{eq1}) and solving $(x,x)_{J_t}=b$ we get  $2[P_{t,+}x,P_{t,+}x]=b$.  That is, $\frac{2(1-t)}{2(1+t)}=b$ and hence $t=\frac{1-b}{1+b}$. 
	Thus for $t_b=\frac{1-b}{1+b},$ $$\mathcal K^+_{t_b}=\mathcal L^+[\dot{+}]\text{span}\{w(t_b) \}$$ is a maximal uniformly positive subspace and hence there exists a fundamental decomposition. We denote the corresponding fundamental symmetry by $J_{a}$. Hence we get that $(x,x)_{J_a}=b$ and $\|x\|_{J_a}=a$.
\end{proof}

\begin{corollary}
	Let  $0\neq x\in  \mathcal K$, $[x,x]\neq 0$ and  $J$ be a given fundamental symmetry. Then there exists a fundamental symmetry $K$ such that 
	\begin{gather*}
	\|x\|_{J}<\|x\|_{K}.
	\end{gather*}
\end{corollary}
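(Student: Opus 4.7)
My plan is to deduce this directly from Theorem \ref{th1}(a), since the hypothesis $[x,x]\neq 0$ is exactly the hypothesis of that part, and the conclusion of Theorem \ref{th1}(a) already tells us that the set of possible $J$-norms of $x$ contains the entire unbounded ray $(|[x,x]|^{1/2},\infty)$.

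First I would set $a=\|x\|_J$. By Theorem \ref{th0}(i), or equivalently by the range statement in Theorem \ref{th1}(a), we have $a\in [|[x,x]|^{1/2},\infty)$, so in particular $a\geq |[x,x]|^{1/2}$. Next I would pick any real number $b>a$; then automatically $b>|[x,x]|^{1/2}$, so Theorem \ref{th1}(a) supplies a fundamental symmetry $J_b$ with $\|x\|_{J_b}=b$. Setting $K=J_b$ gives $\|x\|_J=a<b=\|x\|_K$, which is the desired inequality.

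There is essentially no obstacle here: once Theorem \ref{th1}(a) is in hand, the corollary is an immediate consequence of the unboundedness of the interval $[|[x,x]|^{1/2},\infty)$. The only mild subtlety is to observe that the inequality $b>|[x,x]|^{1/2}$ needed to apply Theorem \ref{th1}(a) is automatic from $b>a\geq |[x,x]|^{1/2}$, so one does not need to treat separately the boundary case where $\|x\|_J$ already equals $|[x,x]|^{1/2}$ (i.e.\ where $x\in\mathcal K^+_J\cup\mathcal K^-_J$). Thus the proof is a two-line application of the preceding theorem.
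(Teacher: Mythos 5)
Your proposal is correct and follows exactly the paper's own argument: choose $b>\|x\|_J$, note $b>|[x,x]|^{1/2}$ since $\|x\|_J\geq |[x,x]|^{1/2}$, and apply Theorem \ref{th1}(a) to produce $K$ with $\|x\|_K=b$. Your explicit remark that $b>|[x,x]|^{1/2}$ is automatic is a small clarification the paper leaves implicit, but the route is the same.
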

\begin{proof}
	Choose a positive real number  $k>\|x\|_{J}$. Then by Theorem {\ref{th1}}(a) there exists a fundamental symmetry $K$ such that  $ \|x\|_{K}=k$, which is greater than $ \|x\|_{J}$ and that implies  $\|x\|_{J}<\|x\|_{K}$.
\end{proof}

\begin{corollary}
	Let  $0\neq x\in  \mathcal K$ be a neutral element and  $J$  be a given  fundamental symmetry. Then there exist fundamental symmetries $K_1$ and  $K_2$ such that 
	\begin{gather*}
	\|x\|_{K_1}<\|x\|_{J}<\|x\|_{K_2}.
	\end{gather*}
\end{corollary}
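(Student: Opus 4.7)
The plan is to reduce this directly to Theorem \ref{th1}(b), since the hypothesis that $x$ is neutral means $[x,x]=0$, which is exactly the setting of part (b) of that theorem.

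First I would observe that $\|x\|_J > 0$ because $(\cdot,\cdot)_J$ is a positive definite inner product on $\mathcal{K}$ and $x \neq 0$. Set $r = \|x\|_J > 0$ and choose any two positive real numbers $a_1, a_2$ with $0 < a_1 < r < a_2$ (for instance $a_1 = r/2$ and $a_2 = 2r$).

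Next, since $[x,x]=0$, Theorem \ref{th1}(b) applies: for the positive number $a_1$ there exists a fundamental symmetry $J_{a_1}$ with $\|x\|_{J_{a_1}} = a_1$, and likewise for $a_2$ there exists a fundamental symmetry $J_{a_2}$ with $\|x\|_{J_{a_2}} = a_2$. Setting $K_1 := J_{a_1}$ and $K_2 := J_{a_2}$ gives
\[
\|x\|_{K_1} = a_1 < r = \|x\|_J < a_2 = \|x\|_{K_2},
\]
which is the desired conclusion.

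There is really no obstacle here; the corollary is an immediate consequence of the fact that, for a neutral vector, the set of $J$-norms realized as $J$ ranges over all fundamental symmetries is the entire open half-line $(0,\infty)$, so any given value $\|x\|_J$ can be strictly bracketed. The only thing to check is the trivial positivity $\|x\|_J > 0$, which follows from the definiteness of the $J$-inner product.
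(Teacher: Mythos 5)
Your proposal is correct and follows essentially the same route as the paper: pick positive reals $a_1 < \|x\|_J < a_2$ and apply Theorem~\ref{th1}(b) to realize each as a $J$-norm of $x$. Your explicit remark that $\|x\|_J>0$ (from positive definiteness of the $J$-inner product) is a small point the paper leaves implicit, but otherwise the arguments coincide.
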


\begin{proof}
	Choose  positive real numbers $k_1,k_2$ such that $0<k_1<\|x\|_{J}<k_2$. Then by Theorem {\ref{th1}} (b) there exist fundamental symmetries $K_1$ and $K_2$ such that $\|x\|_{K_1}<\|x\|_{J}<\|x\|_{K_2}$.
\end{proof}

\begin{corollary}
	Let $x$ be any arbitrary non-zero (neutral or non-neutral) element in $\mathcal K$. If   $\|x\|_{J_1}<\|x\|_{J_2}$ for some  fundamental symmetries $J_{1}$ and $J_{2}$, then there exists a fundamental symmetry $J$ such that $\|x\|_{J_1}<\|x\|_{J}<\|x\|_{J_2}$.
\end{corollary}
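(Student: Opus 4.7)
The plan is to reduce the corollary to a direct application of Theorem \ref{th1} by interpolating a real number $a$ strictly between the two given $J$-norms and producing a fundamental symmetry that realizes exactly that value.

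First, I would choose any real number $a$ with $\|x\|_{J_1} < a < \|x\|_{J_2}$, which exists by the density of $\mathbb{R}$. The goal is then to produce a fundamental symmetry $J$ with $\|x\|_J = a$, and the two parts of Theorem \ref{th1} together say exactly which values of $a$ are attainable.

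Next, I would split on whether $x$ is neutral. If $[x,x] \ne 0$, then for every fundamental symmetry $K$ one has $\|x\|_K \ge |[x,x]|^{1/2}$ (the lower bound recorded in Theorem \ref{th0}(i)); applying this to $K = J_1$ gives $a > \|x\|_{J_1} \ge |[x,x]|^{1/2}$, so Theorem \ref{th1}(a) furnishes a fundamental symmetry $J = J_a$ with $\|x\|_J = a$. If instead $[x,x] = 0$, then $a > \|x\|_{J_1} > 0$ since $x \ne 0$, so Theorem \ref{th1}(b) directly supplies a fundamental symmetry $J_a$ with $\|x\|_{J_a} = a$. In either case the resulting $J$ satisfies $\|x\|_{J_1} < \|x\|_J = a < \|x\|_{J_2}$, which is the claim.

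There is no real obstacle here: the entire content has been shifted into Theorem \ref{th1}, and the corollary is essentially the statement that the set of attainable $J$-norms of a fixed nonzero $x$ is order-convex. The only point that requires a moment's care is verifying, in the non-neutral case, that the chosen $a$ clears the threshold $|[x,x]|^{1/2}$ strictly, which follows immediately from $\|x\|_{J_1} \ge |[x,x]|^{1/2}$ and the strict inequality $a > \|x\|_{J_1}$.
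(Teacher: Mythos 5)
Your proof is correct and follows essentially the same route as the paper: pick $a$ strictly between $\|x\|_{J_1}$ and $\|x\|_{J_2}$ and invoke Theorem \ref{th1} to realize it as a $J$-norm. You are in fact slightly more careful than the paper in verifying, via Theorem \ref{th0}(i), that $a$ exceeds the threshold $|[x,x]|^{1/2}$ in the non-neutral case.
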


\begin{proof}
	Choose $a>|[x,x]|^\frac{1}{2}$ such that $\|x\|_{J_1}<a<\|x\|_{J_2}$. Then by Theorem {\ref{th1}} the result follows.
\end{proof}

\begin{corollary}\label{th2}
	Let $\mathcal K$ be a Krein space with a fundamental decomposition and a corresponding fundamental symmetry $J$. Let $(x_n)$ be a sequence of non-zero neutral or non-neutral vectors such that $(x_n)$ converges to some $x$ in $\mathcal K$. Then there exists a sequence of fundamental symmetries $(J_n)$ such that $\|x_n\|_{J_n}\rightarrow\infty$.\end{corollary}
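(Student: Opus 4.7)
The plan is to apply Theorem \ref{th1} term by term to the sequence $(x_n)$. Since each $x_n$ is non-zero, that theorem tells us exactly which values $\|x_n\|_J$ can take as $J$ ranges over fundamental symmetries: in part (a), the set of attainable norms is $[|[x_n,x_n]|^{1/2},\infty)$, and in part (b) it is $(0,\infty)$. So once I prescribe a target value that lies in the appropriate range, some fundamental symmetry realizes it.

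First, for each $n$ I would pick a target $a_n$ that is admissible in either case, for instance
\[ a_n = n + |[x_n,x_n]|^{1/2} + 1. \]
Then $a_n > |[x_n,x_n]|^{1/2}$ (handling part (a) when $[x_n,x_n] \neq 0$) and $a_n > 0$ (handling part (b) when $[x_n,x_n] = 0$), so in both cases Theorem \ref{th1} supplies a fundamental symmetry $J_n$ with $\|x_n\|_{J_n} = a_n$. Since $a_n \geq n+1$, the desired divergence $\|x_n\|_{J_n} \to \infty$ is immediate.

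I do not anticipate any serious obstacle: the statement is essentially a direct consequence of Theorem \ref{th1}. The convergence hypothesis $x_n \to x$ plays no role in the construction itself; its purpose seems to be to emphasize the contrast with the fact that for any \emph{fixed} fundamental symmetry $J$ the map $y \mapsto \|y\|_J$ is continuous, so $\|x_n\|_J$ would stay bounded. The content of the corollary is precisely that by letting the fundamental symmetry vary with $n$ one can force the $J_n$-norms to blow up even along a convergent sequence.
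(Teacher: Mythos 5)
Your proposal is correct and follows essentially the same route as the paper: apply Theorem \ref{th1} term by term, choosing for each $n$ an admissible target value $a_n$ and letting $J_n$ realize it. In fact your version is slightly tighter than the paper's. The paper sets $J_1=J$ and recursively picks $a_k>\max\{|[x_k,x_k]|^{1/2},\|x_{k-1}\|_{J_{k-1}}\}$, which only guarantees that $(\|x_n\|_{J_n})$ is increasing --- and an increasing sequence need not diverge unless the targets are forced to grow; your explicit choice $a_n=n+|[x_n,x_n]|^{1/2}+1\geq n+1$ makes the divergence immediate. You also correctly invoke part (b) of Theorem \ref{th1} for the neutral terms (the paper's proof cites only part (a), though the statement admits neutral vectors), and your observation that the hypothesis $x_n\to x$ is never used --- serving only to contrast with the boundedness of $\|x_n\|_J$ for a fixed $J$ --- matches the role it plays in the paper.
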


\begin{proof} 
	Let $J_1=J$. Now  choose a real number $a_2> \max \{|[x_2,x_2]|^\frac{1}{2},\|x_1\|_{J_1}\}$. Then by Theorem {\ref{th1}} (a) there exists a fundamental symmetry $J_2$ such that $\|x_2\|_{J_2}=a_2>\|x_1\|_{J_1}$.
	In a similar way, we can find $J_k$  by  choosing 
	\begin{gather*} a_k>\max\big\{|[x_k,x_k]|^\frac{1}{2},\|x_{k-1}\|_{J_{k-1}}\big\}\end{gather*} and by using Theorem {\ref{th1}} (a)  we get $J_k$ such that  $\|x_k\|_{J_k}=a_k>\|x_{k-1}\|_{J_{k-1}}$. Continuing the process we see that  $\|x_n\|_{J_n}\rightarrow\infty$ as $n\rightarrow\infty$.
\end{proof}

\begin{corollary}\label{th3}
	Let $(x_n)$ be a sequence of non-zero neutral elements in $\mathcal K$. Then there exists a sequence of fundamental symmetries  $(J_n)$ such that  $\|x_n\|_{J_n}\rightarrow0$ as $n\rightarrow\infty$.
\end{corollary}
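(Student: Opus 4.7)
The plan is to apply Theorem \ref{th1}(b) term-by-term. Since each $x_n$ is a non-zero neutral vector, the full range $(0,\infty)$ of possible $J$-norms is available for $x_n$ as $J$ varies over all fundamental symmetries. In particular, we have complete freedom to prescribe how small $\|x_n\|_{J_n}$ should be for each $n$ independently, so there is no obstruction to forcing the sequence to zero.

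Concretely, I would fix any sequence $(a_n)$ of positive reals with $a_n\to 0$ (the simplest choice being $a_n=1/n$). Then for each $n$, Theorem \ref{th1}(b) applied to the non-zero neutral vector $x_n$ produces a fundamental symmetry $J_n$ with $\|x_n\|_{J_n}=a_n$. Passing to the limit yields $\|x_n\|_{J_n}\to 0$, which is exactly the claim. No compatibility condition between different $J_n$'s is required, since the corollary only speaks about each term $x_n$ paired with its own $J_n$.

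There is essentially no obstacle: all the work lies in Theorem \ref{th1}(b), whose proof already provides an explicit recipe (via the auxiliary vector $w(t)=u+tv$ with $t\in(-1,1)$) for tuning the norm of a neutral element to any prescribed positive value. The present corollary is simply a sequential application of that recipe, analogous to how Corollary \ref{th2} constructs a diverging sequence of norms by iterating Theorem \ref{th1}(a). The contrast with Corollary \ref{th2} is worth noting: there the monotone choice $a_k>\|x_{k-1}\|_{J_{k-1}}$ forced divergence using part (a) (which only allows norms above $|[x,x]|^{1/2}$), while here the neutrality $[x_n,x_n]=0$ removes that lower bound entirely and permits norms tending to zero.
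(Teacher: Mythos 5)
Your proof is correct, and it is in fact tighter than the paper's own argument. Both you and the paper apply Theorem \ref{th1}(b) term-by-term, but the choice of the target values $a_n$ differs in a way that matters. The paper picks the $a_k$ recursively, requiring only $\|x_{k-1}\|_{J_{k-1}}>a_k>0$, and then concludes that the resulting sequence $(\|x_n\|_{J_n})$, being decreasing and bounded below by $0$, tends to $0$. That last inference is a non sequitur: a decreasing sequence bounded below by $0$ converges to its infimum, which need not be $0$ (for instance $a_k=\tfrac12+2^{-k}$ satisfies the paper's recursive constraint whenever the first norm exceeds $1$, yet converges to $\tfrac12$). Your version avoids this entirely by prescribing $a_n=1/n$ (or any fixed null sequence of positive reals) up front, so that $\|x_n\|_{J_n}=a_n\to 0$ is immediate. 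Your observation that no compatibility between the different $J_n$ is needed, and your contrast with Corollary \ref{th2} where part (a) imposes the lower bound $|[x,x]|^{1/2}$ while neutrality removes it, are both accurate. In short: same key lemma, but your direct choice of targets is what actually closes the argument.
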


\begin{proof}
	We have $x_1\neq0$, then we can find a fundamental symmetry $J_1$ such that $\|x_1\|_{J_1}>0$. Choose a real number $a_2$ such that $\|x_1\|_{J_1}>a_2>0$. Then by Theorem {\ref{th1}} (b)  there exists a fundamental symmetry $J_2$ such that 
	$\|x_2\|_{J_2}=a_2$ and so we get $\|x_1\|_{J_1}>\|x_2\|_{J_2}$. Choose a real number $a_k$ such that 
	\begin{eqnarray*} 
		\|x_{k-1}\|_{J_{k-1}}>a_k>0.
	\end{eqnarray*} 
	Then by Theorem {\ref{th1}} (b)  there exists a fundamental symmetry $J_k$ such that
	$\|x_{k-1}\|_{J_{k-1}}>\|x_k\|_{J_k}$. Thus we see that $(\|x_n\|_{J_n})$ is a decreasing sequence which is bounded below by $0$ and hence $\|x_n\|_{J_n}\rightarrow0$ as $ n\rightarrow\infty$.
\end{proof}
\begin{remark}
	Corollaries {\ref{th2}} and {\ref{th3}} generalize the Lemma in \cite{BOGNAR1} which says for a non-neutral element $x$ there exists a sequence of fundamental norms $(p_n)$ such that $p_n(x)\rightarrow\infty$ as $n\rightarrow\infty$ and for a neutral element $y$ there exist sequences of fundamental norms $(p_n)$ and $(q_n)$ such that $p_n(x)\rightarrow\infty$ and $q_n(x)\rightarrow 0$ as $n\rightarrow\infty$.
\end{remark}

\begin{example}
	Consider the fundamental symmetry $J_n$ given in Example \ref{eg1}. Then for $x=(x_1,y_1)\in  \mathcal K$ $$\|x\|_{J_n}^2=\frac{(n^2+1)((x_1^2+y_1^2)-4nx_1y_1}{n^2-1}.$$ We fix $x=(2,1)$. Then $[x,x]=3$. Let $a=2>|[x,x]|^\frac{1}{2}$. By solving  $\|x\|_{J_n}^2=4$, we see that $n$ equals to the positive square root of the equation $5n^2-8n-7=0$. 
\end{example}

\begin{theorem} Let $\mathcal K$ be a Krein space. Then the following are true.
	\begin{enumerate}
		\item[(a)]Let $0\ne x\in \mathcal K$ be a non-neutral element, $\alpha \in \mathbb{C}$. Then for every $\varepsilon$ such that $\varepsilon>|[x,x]|^\frac{1}{2} |1-|\alpha||$, there exists  a fundamental symmetry $J$ such that $|\|x\|_J-\|\alpha x\|_J|<\varepsilon.$ 
		
		\item[(b)] 
		Let $0\ne x\in \mathcal K$ be a neutral element, $\alpha \in \mathbb{C}$. Then for every $\varepsilon>0$, there exists a fundamental symmetry $J$ such that $|\|x\|_J-\|\alpha x\|_J|<\varepsilon.$
	\end{enumerate}
	\begin{proof}
		Suppose $|\alpha|=1$, the result is trivial.
		Next we asssume that $|\alpha|\ne 1$. 
		\begin{enumerate}
			\item[(a)] We have $\varepsilon>|[x,x]|^\frac{1}{2} |1-|\alpha||$, which implies $ |[x,x]|^\frac{1}{2}<\frac{\varepsilon}{|1-|\alpha||}$. Let $c\in \mathbb{R}$ be such that $ |[x,x]|^\frac{1}{2}<c<\frac{\varepsilon}{|1-|\alpha||}$. Then by Theorem \ref{th1}  there exists  a fundamental symmetry $J$ such that $\|x\|_J=c$ which implies $\|x\|_J<\frac{\varepsilon}{|1-|\alpha||}$ so that we get $|\|x\|_J-\|\alpha x\|_J|<\varepsilon.$ 
			
			\item[(b)]We have $|1-|\alpha||>0$, which implies $\frac{\varepsilon}{|1-|\alpha||}>0$. Let $c\in \mathbb{R}$ be such that $0<c<\frac{\varepsilon}{|1-|\alpha||}$. Then by Theorem \ref{th1}  there exists  a fundamental symmetry $J$ such that $\|x\|_J=c$ which implies $\|x\|_J<\frac{\varepsilon}{|1-|\alpha||}$ so that we get $|\|x\|_J-~\|\alpha x\|_J|<\varepsilon.$ 
		\end{enumerate}
	\end{proof}
	
%	\begin{que}Let $\mathcal  K$ be a Krein space.
%		Does there exists a fundamental symmetry $J$ %such that
%		$$\|x\|_J^2= |[x,x]|$$ for all $x\in \mathcal K$?
%	\end{que}
	
\end{theorem}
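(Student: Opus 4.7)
The key observation is that the $J$-norm is a genuine norm on $\mathcal K$ induced from the positive definite inner product $(\cdot,\cdot)_J$, so absolute homogeneity gives $\|\alpha x\|_J=|\alpha|\,\|x\|_J$ for every fundamental symmetry $J$ and every scalar $\alpha\in\mathbb C$. Consequently
\[
\bigl|\|x\|_J-\|\alpha x\|_J\bigr|=\|x\|_J\cdot\bigl|1-|\alpha|\bigr|,
\]
which reduces the problem to producing a fundamental symmetry $J$ for which $\|x\|_J$ is strictly smaller than $\varepsilon/|1-|\alpha||$. The case $|\alpha|=1$ trivializes the bound, so I would dispense with it at the outset and then assume $|\alpha|\ne 1$ so that the denominator $|1-|\alpha||$ is positive.

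For part (a), since $x$ is non-neutral, the hypothesis $\varepsilon>|[x,x]|^{1/2}\bigl|1-|\alpha|\bigr|$ rewrites as $|[x,x]|^{1/2}<\varepsilon/|1-|\alpha||$. Thus the open interval $\bigl(|[x,x]|^{1/2},\varepsilon/|1-|\alpha||\bigr)$ is nonempty, and I pick any real number $c$ in it. Theorem \ref{th1}(a) furnishes a fundamental symmetry $J$ with $\|x\|_J=c$; multiplying by $|1-|\alpha||$ and using the reduction above yields $\bigl|\|x\|_J-\|\alpha x\|_J\bigr|=c\,|1-|\alpha||<\varepsilon$.

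For part (b), with $x$ neutral there is no lower bound beyond zero on the attainable values of $\|x\|_J$, so I simply pick $c\in\bigl(0,\varepsilon/|1-|\alpha||\bigr)$ and invoke Theorem \ref{th1}(b) to produce a fundamental symmetry $J$ with $\|x\|_J=c$; the same reduction closes the argument.

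There is no substantive obstacle here: the proof is essentially an unpacking of homogeneity of the $J$-norm combined with the surjectivity statements of Theorem \ref{th1}. The only mild point to be careful about is the separate treatment of $|\alpha|=1$ so as not to divide by zero, and confirming that the hypothesis in (a) is exactly what is needed for the relevant interval to be nonempty (this is why the condition takes the precise form $\varepsilon>|[x,x]|^{1/2}\bigl|1-|\alpha|\bigr|$ rather than a simpler expression).
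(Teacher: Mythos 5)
Your proof is correct and follows essentially the same route as the paper: dispose of $|\alpha|=1$, use homogeneity to reduce to making $\|x\|_J<\varepsilon/|1-|\alpha||$, and invoke Theorem \ref{th1} to realize a suitable value $c$. You merely make the homogeneity identity $\bigl|\|x\|_J-\|\alpha x\|_J\bigr|=\|x\|_J\,\bigl|1-|\alpha|\bigr|$ explicit where the paper leaves it implicit.
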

\begin{theorem}\label{th4}
	Let $x$ and $y$ be orthogonal non-neutral elements of a Krein space $\mathcal K$ with a fundamental decomposition $\mathcal K=\mathcal K^+[\dot{+}]\mathcal K^-$.  If $x$ and $y$ are linearly independent and if 	
	 \begin{eqnarray}\label{c1}dim(\mathcal K^+)>1,dim(\mathcal K^-)>0,[y,y]>0 \end{eqnarray} or 
	\begin{eqnarray}\label{c2} dim(\mathcal K^-)>1,dim(\mathcal K^+)>0,[y,y]<0 \end{eqnarray}then there exists a sequence of fundamental symmetries $(J_n)$ such that $\frac{\|y\|_{J_n}}{\|x\|_{J_n}}\rightarrow0$ as  $ n\rightarrow\infty$.
\end{theorem}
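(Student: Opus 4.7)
The plan is to reduce to Theorem \ref{th1}(a) applied to $x$ inside the $[.,.]$-orthogonal complement of $y$. Since $y$ is non-neutral, $\text{span}\{y\}$ is a non-degenerate one-dimensional subspace, so one has the $[.,.]$-orthogonal decomposition $\mathcal K = \text{span}\{y\}\,[\dot{+}]\,\{y\}^{[\perp]}$, where $\{y\}^{[\perp]}$ inherits the structure of a Krein space. Under hypothesis (\ref{c1}) its signature is $(\dim\mathcal K^+ -1,\,\dim\mathcal K^-)$, and under (\ref{c2}) it is $(\dim\mathcal K^+,\,\dim\mathcal K^- -1)$; in either case both summands are at least $1$, so $[.,.]$ restricted to $\{y\}^{[\perp]}$ is indefinite. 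Moreover, $x \perp y$ forces $x\in\{y\}^{[\perp]}$, and $x$ is non-neutral there with the same value $[x,x]\ne 0$.

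Next, I would invoke Theorem \ref{th1}(a) inside $\{y\}^{[\perp]}$: for every integer $n > |[x,x]|^{1/2}$ there is a fundamental symmetry $J'_n$ of $\{y\}^{[\perp]}$ with $\|x\|_{J'_n}=n$, corresponding to a fundamental decomposition $\{y\}^{[\perp]} = \mathcal N^+_n\,[\dot{+}]\,\mathcal N^-_n$.

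Then I would lift $J'_n$ to a fundamental symmetry $J_n$ of $\mathcal K$ by attaching $\text{span}\{y\}$ to the definite component matching the sign of $[y,y]$: in case (\ref{c1}) set $\mathcal K^+_n := \text{span}\{y\}\,[\dot{+}]\,\mathcal N^+_n$ and $\mathcal K^-_n := \mathcal N^-_n$; in case (\ref{c2}) attach $\text{span}\{y\}$ to the negative block instead. Because $\text{span}\{y\}$ is $[.,.]$-orthogonal to $\{y\}^{[\perp]}$, the resulting $J_n$ acts as $\pm\mathrm{Id}$ on $\text{span}\{y\}$ (sign according to the case) and as $J'_n$ on $\{y\}^{[\perp]}$. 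A direct computation gives $\|y\|_{J_n}^2=[J_ny,y]=|[y,y]|$, independent of $n$, and $\|x\|_{J_n}^2=[J_nx,x]=[J'_nx,x]=\|x\|_{J'_n}^2=n^2$, hence $\|y\|_{J_n}/\|x\|_{J_n}\to 0$.

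The main obstacle will be confirming that the lifted decomposition $\mathcal K=\mathcal K^+_n[\dot{+}]\mathcal K^-_n$ is genuinely a fundamental decomposition of $\mathcal K$, i.e., that the blocks $\mathcal K^\pm_n$ are uniformly definite and complete in their intrinsic norms. This reduces to observing that, since $y$ is $[.,.]$-orthogonal to $\{y\}^{[\perp]}$, every element $v\in\mathcal K^+_n$ in case (\ref{c1}) uniquely decomposes as $v=ay+z$ with $z\in\mathcal N^+_n$, and the identity $[v,v]=|a|^2[y,y]+[z,z]$, together with $[y,y]>0$ and the uniform positivity of $\mathcal N^+_n$ in $\{y\}^{[\perp]}$, yields the required uniform positivity and completeness; case (\ref{c2}) and the negative blocks are handled symmetrically.
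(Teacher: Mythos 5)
Your proposal is correct, and it proves the theorem by a genuinely cleaner route than the paper's. The paper works by hand: it builds a decomposition in which $y$ itself is one of the positive basis directions (so that $\|y\|_{J_n}=[y,y]^{1/2}$ for every $n$), and then explicitly tilts a complementary positive direction $v(t_n)=t_nx_2+(1-t_n)e_1$ toward a neutral vector $e_1$ so that $\|x\|_{J_n}\to\infty$, splitting into the subcases $[x,x]>0$ and $[x,x]<0$ and computing the projections $P_n^{+}$ directly. Your argument rests on exactly the same mechanism --- keep $\text{span}\{y\}$ inside the definite component of matching sign so that $\|y\|_{J_n}$ is pinned at $|[y,y]|^{1/2}$, while $\|x\|_{J_n}\to\infty$ --- but packages the second half as an application of Theorem~\ref{th1}(a) to the Krein space $\{y\}^{[\perp]}$, after checking that this orthocomplement is indefinite under (\ref{c1}) or (\ref{c2}). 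That reduction buys three things: it removes the case split on the sign of $[x,x]$; it replaces the paper's explicit (and somewhat error-prone) computation of $[P_n^{+}x_2,x_2]$ by a citation; and it makes transparent where the dimension hypotheses enter, namely in guaranteeing that $\{y\}^{[\perp]}$ has nontrivial positive and negative parts so that Theorem~\ref{th1}(a), whose proof requires an indefinite ambient space, is applicable. The only ingredients you should state explicitly in a final write-up are the standard facts you already gesture at: that the one-dimensional definite subspace $\text{span}\{y\}$ is ortho-complemented with $\{y\}^{[\perp]}$ again a Krein space of the asserted signature, and your verification that the lifted components $\mathcal K^{\pm}_n$ are uniformly definite and intrinsically complete (which your identity $[v,v]=|a|^2[y,y]+[z,z]$ correctly supplies). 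With those spelled out, the proof is complete.
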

\begin{proof}
	The case (\ref{c2}) can be reduced to (\ref{c1}) by passing to the inner product $[u,v]'=-[u,v]$ where $u,v\in \mathcal K$. Thus we consider only the case (\ref{c1}).
	From the hypothesis,  we can find at least two positive elements $x_1,x_2$ and a negative element $y_1$ in $\mathcal K$ such that\\ 
	\begin{equation*} \mathcal K=\mathcal L_1^+ [\dot{+}] \text{span}\{x_1\} [\dot{+}] \text{span}\{x_2\} [\dot{+}] \mathcal L_2^- [\dot{+}] \text{span}\{y_1\},
	\end{equation*}
	where $\mathcal L_1^+$ and $\mathcal L_2^-$ are positive and negative subspaces respectively.  
	
We now first discuss the case when $[x,x]>0$. Choose $x_1=y$ and and $x_2=\frac{x}{\sqrt{[x,x]}}$ so that $[x_2,x_2]=1$ and choose $y_1$ such that $[y_1,y_1]=-1$. We can find a neutral element $e_1=s_0x_2+(1-s_0)y_1$ for some $s_0\in (0,1)$. \
	Take $ v(t_n)=t_nx_2+(1-t_n)e_1$ where $t_n=\frac{1}{n},n>1$. Then $[v(t_n),v(t_n)]={t_n}^2+2s_0t_n(1-t_n)>0$ and $[v(t_n),x_1]=0$. Set
	\begin{equation*} {\mathcal K_n}^+=\mathcal L_1 [\dot{+}] \text{span}\{x_1\} [\dot{+}] \text{span}\{v(t_n)\} .
	\end{equation*}
	Thus the orthogonal projection ${P_n}^+$ onto  ${\mathcal K_n}^+$ can be written as  
	\begin{equation*}
	{P_n}^+=P_{\mathcal L_1}+P_{x_1}+P_{v(t_n)}
	\end{equation*}
	where $P_{\mathcal L_1}$ is the orthogonal projection onto $\mathcal L_1$, $P_{x_1}$ is the orthogonal projection onto  $\text{span}\{x_1\}$ and $P_{v(t_n)}$ is the orthogonal projection onto  $\text{span}\{v(t_n)\}$, which has the form 
	\begin{equation*}
	P_{v(t_n)}z=\frac{[z,v(t_n)]}{[v(t_n),v(t_n)]}v(t_n).
	\end{equation*}
	Choosing a non-zero element $u(t_n)$ in the $\text{span}\{x_2,y_1\}$, which is orthogonal to $v(t_n)$, we get a fundamental decompositon with ${\mathcal K_n}^+$ and ${\mathcal K_n}^-= \mathcal L_2[\dot{+}] \text{span}\{u(t_n)\} $ and a corresponding fundamental symmetry $J_n=2{P_n}^+-I$. 
	For a vector $z\in \mathcal K$ we have
	\begin{equation*}
	{\|{z\|^2_{J_n}}}=[J_nz,z]=[(2{P_n}^+-I)z,z]=2[{P_n}^+z,z]-[z,z].
	\end{equation*}
	Let us calculate ${\|y\|^2_{J_n}}$. Since  $x_1=y$ we have
	$ {P_n}^+y=P_{x_1}y=y$.
	Thus $\|{y\|_{J_n}}={[y,y]}^\frac{1}{2}$ for all $n>2$. 
	Let us find $\|{x\|_{J_n}}$.We have
	\begin{equation*}
	{P_n}^+x_2=P_{v(t_n)}x_2=\frac{[x_2,v(t_n)]}{[v(t_n),v(t_n)]}v(t_n)=\frac{t_n+s_0(1-t_n)}{{t_n}^2+2s_0t_n(1-t_n)}v(t_n),
	\end{equation*}
	which implies
	\begin{equation*}
	\begin{split}
	[{P_n}^+x_2,x_2]&=\frac{t_n+s_0(1-t_n)}{{t_n}^2+2s_0t_n(1-t_n)}[v(t_n),x_2]\\&=1+\frac{{s_0}^2}{{t_n}^2+2s_0t_n(1-t_n)}+\frac{t_n-2}{{t_n}^2+2s_0t_n(1-t_n)}.
	\end{split}
	\end{equation*}
	And hence $\|{x\|_{J_n}}=|\sqrt{[x,x]}|\|{x_2\|_{J_n}}\rightarrow\infty$ as $n\rightarrow\infty.$ Thus $ \frac{\|{y\|_{J_n}}}{\|{x\|_{J_n}}}\rightarrow0$ as $n\rightarrow\infty.$
	
	We now discuss the case when $[x,x]<0$. We take $y_1=\frac{x}{\sqrt{|[x,x]|}}$ so that $[y_1,y_1]=-1$. Choose $x_2$ such that $[x_2,x_2]=1$. Proceeding as above we find $\|{y\|_{J_n}}={[y,y]}^\frac{1}{2}$ for all $n>2$ and $\|{y_1\|_{J_n}}$ as follows. We have
	\begin{equation*}
	{P_n}^+y_1=P_{v(t_n)}y_1=\frac{[y_1,v(t_n)]}{[v(t_n),v(t_n)]}v(t_n)=\frac{t_n+s_0(1-t_n)}{{t_n}^2+2s_0t_n(1-t_n)}v(t_n).
	\end{equation*}
	which implies
	\begin{equation*}
	\begin{split}
	[{P_n}^+y_1,y_1]&=\frac{s_0-1}{{t_n}^2+2s_0t_n(1-t_n)}[v(t_n),y_1]\\&=1+\frac{{s_0-1}^2}{{t_n}^2+2s_0t_n(1-t_n)}+\frac{t_n-2}{{t_n}^2+2s_0t_n(1-t_n)}.
	\end{split}
	\end{equation*}
	Thus $\|{x\|_{J_n}}=|\sqrt{|[x,x]|}|\|{y_1\|_{J_n}}\rightarrow\infty$ as $n\rightarrow\infty.$ And hence $ \frac{\|{y\|_{J_n}}}{\|{x\|_{J_n}}}\rightarrow0$ as $n\rightarrow\infty.$\\
%	\begin{que}
%		Complete Subcase (iii):$[x,x]=0$.
%	\end{que} 
%	\begin{que}
%	 Case II:$[x,y]\ne 0$
%	\end{que} 
\end{proof}
%\begin{que} Complete the following example for a general vector.
%\end{que} 
%\begin{example}
%	Let $\mathcal K$ be the three dimensional Minkowski space $M^{2+1}$ with $[x,y]=x_1y_1+x_2y_2-x_3y_3$ where $x=(x_1,x_2,x_3),$ $y=(y_1,y_2,y_3)\in \mathcal K$. Let $y=(0,1,0)$ then $[y,y]>0$. For $x$ and $y$ to be linearly independent we have to choose $x$ such that $x$ is not on the $y$-axis. So let us take $x=(1,0,0)$ and $n=2$. Then we get ${\mathcal K_n}^+$ as the plane shown in the Figure \ref{f1}.
%\end{example}

%\begin{figure}[h]
%	\centering
%	\includegraphics[scale=0.27]{cone2.png}
%	\caption{Illustration of choosing points in Theorem \ref{th4} in Minkowski space $M^{2+1}$.}
	%\label{f1}
%\end{figure}

\begin{theorem}
	Let $x$ and $y$ be linearly independent elements of a Krein space $\mathcal K$ which are non-orthogonal. If $y$ is neutral, there exists a sequence of fundamental symmetries $ (J_n)$
	such that $\frac{\|y\|_{J_n}}{\|x\|_{J_n}}\rightarrow0$ as  $
	n\rightarrow\infty$.
\end{theorem}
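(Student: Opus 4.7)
The plan is to split on whether $x$ is neutral or not. The non-neutral case will reduce essentially to Corollary \ref{th3}, while the neutral case requires an explicit hyperbolic-plane construction that exploits the hypothesis $[x,y]\neq 0$.

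First, if $[x,x]\neq 0$, Theorem \ref{th0}(i) gives the universal lower bound $\|x\|_J\geq |[x,x]|^{1/2}>0$ for every fundamental symmetry $J$. Since $y$ is a nonzero neutral vector, Corollary \ref{th3} (applied to the constant sequence $x_n=y$) produces fundamental symmetries $(J_n)$ with $\|y\|_{J_n}\to 0$. Dividing yields $\|y\|_{J_n}/\|x\|_{J_n}\leq \|y\|_{J_n}/|[x,x]|^{1/2}\to 0$, and this subcase is complete without even appealing to non-orthogonality.

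The substantive case is $[x,x]=0$, where both $x$ and $y$ are neutral and satisfy $[x,y]\neq 0$. I would first rescale $x$ (which only multiplies $\|x\|_{J_n}$ by a fixed positive constant and hence does not affect the limit of the ratio) so that $[x,y]=1$, then set $p=\tfrac{1}{\sqrt{2}}(x+y)$ and $q=\tfrac{1}{\sqrt{2}}(y-x)$. A direct check yields $[p,p]=1$, $[q,q]=-1$, $[p,q]=0$, so $\mathcal{M}:=\mathrm{span}\{x,y\}=\mathrm{span}\{p,q\}$ is a non-degenerate two-dimensional Krein subspace; its ortho-complement $\mathcal{M}^{[\perp]}$ is itself a Krein space and therefore admits a fundamental decomposition $\mathcal{M}^{[\perp]}=\mathcal{N}^{+}[\dot{+}]\mathcal{N}^{-}$. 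For $t\in(-1,1)$ I would introduce
\[
w(t)=p+tq,\qquad u(t)=tp+q,
\]
which are $[\cdot,\cdot]$-orthogonal with $[w(t),w(t)]=1-t^{2}>0$ and $[u(t),u(t)]=t^{2}-1<0$. Taking $\mathcal{K}^{+}_{t}=\mathcal{N}^{+}[\dot{+}]\mathrm{span}\{w(t)\}$ and $\mathcal{K}^{-}_{t}=\mathcal{N}^{-}[\dot{+}]\mathrm{span}\{u(t)\}$ supplies a fundamental decomposition of $\mathcal{K}$ with fundamental symmetry $J_t=2P^{+}_{t}-I$, exactly in the style of Theorem \ref{th1}(b).

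The closing step is to expand $x$ and $y$ in the basis $\{w(t),u(t)\}$ and apply the projection formula $P_{w(t)}z=\tfrac{[z,w(t)]}{[w(t),w(t)]}w(t)$ used earlier in the paper. Because $x,y\in\mathcal{M}$, the projection $P^{+}_{t}$ acts on them inside $\mathcal{M}$, and a short bookkeeping yields
\[
\|y\|^{2}_{J_{t}}=\frac{1-t}{1+t},\qquad \|x\|^{2}_{J_{t}}=\frac{1+t}{1-t},
\]
so that $\|y\|_{J_{t}}/\|x\|_{J_{t}}=(1-t)/(1+t)\to 0$ as $t\to 1^{-}$. Picking any sequence $t_n\uparrow 1$ and setting $J_n:=J_{t_n}$ produces the required sequence. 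The main obstacle I anticipate is justifying that $\mathcal{M}$ really is ortho-complemented in $\mathcal{K}$ so that $\mathcal{N}^{\pm}$ exist; this hinges on $\mathcal{M}$ being a regular subspace, which is guaranteed by the signature information on $\{p,q\}$. Once this is secured, the rest is a routine calculation parallel to the one already carried out in the proof of Theorem \ref{th1}(b).
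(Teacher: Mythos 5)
Your proposal is correct and follows essentially the same route as the paper: the non-neutral case combines the lower bound $\|x\|_J\ge|[x,x]|^{1/2}$ from Theorem \ref{th0} with a sequence driving $\|y\|_{J_n}\to 0$, and the neutral case uses the same hyperbolic-plane construction $p=\tfrac{1}{\sqrt2}(x+y)$, $q=\pm\tfrac{1}{\sqrt2}(x-y)$ with tilted axes $w(t)=p+tq$, yielding the identical formulas $\|x\|^2_{J_t}$, $\|y\|^2_{J_t}$ up to the cosmetic sign change $t\mapsto -t$. Your explicit remark that $\mathcal M=\mathrm{span}\{p,q\}$ must be ortho-complemented (it is, being finite-dimensional and non-degenerate) makes a point the paper leaves implicit.
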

\begin{proof}
	Since $[y,y]=0$, there exists a sequence of fundamental symmetries
	$(J_n)$ such that $ \|y\|_{J_n}\rightarrow0$ as  $
	n\rightarrow\infty$. 	We first discuss the case when $x$ is non-neutral.	By Theorem \ref{th0} we have 
	\begin{equation*}
		\Big\{\|x\|_J: J \text{ is a fundamental symmetry}\Big\}=[|[x,x]|^\frac{1}{2},\infty).
	\end{equation*}
	So for all n, $\|x\|_{J_n}\ge |[x,x]|^\frac{1}{2}$. We get
	$\Big(\frac{1}{\|x\|_{J_n}}\Big)$ is bounded and hence we can conclude that
	$\frac{\|y\|_{J_n}}{\|x\|_{J_n}}\rightarrow0$ as
	$n\rightarrow\infty$.
	
	We now discuss the case when $x$ is neutral. Let $[x,y]=k$. Since $x$ and $y$ are non-orthogonal, $k\neq 0$. By replacing $y$ by $\frac{y}{\overline{k}}$ we get $[x,y]=1.$ Let
	\begin{equation*}
	x_1=\frac{1}{\sqrt{2}}(x+y), y_1=\frac{1}{\sqrt{2}}(x-y), 
	\end{equation*} 
	then
	\begin{equation*}
	x=\frac{1}{\sqrt{2}}( x_1+y_1), y=\frac{1}{\sqrt{2}}( x_1-y_1), [ x_1, x_1]=1, [y_1,y_1]=-1, [ x_1,y_1]=0. 
	\end{equation*}
	Let $\mathcal K=\mathcal M^+[\dot{+}]\mathcal M^-$ be a fundamental decomposition such that
	\begin{equation*}
	\mathcal M^+=\mathcal L^+[\dot{+}]\text{span}\{ x_1\}, \mathcal M^-=\mathcal L^-[\dot{+}]\text{span}\{y_1\} 
	\end{equation*} 
	with some subspaces $\mathcal L^\pm$.  Set $v(t_n)=x_1+t_ny_1$, $t_n\in(-1,1)$. Then $$[v(t_n),v(t_n)]=1-{t_n}^2.$$
	We have $\mathcal K^+_{t_n}=\mathcal L^+[\dot{+}]\text{span}\{v(t_n) \}$ is a maximal uniformly positive subspace and hence there exists a fundamental decomposition of $\mathcal K$ with $\mathcal K^+=\mathcal K^+_{t_n}$. Now the projection $P^+_{t_n}$ onto $\mathcal K^+_{t_n}$ can be written as
	\begin{equation*}
	P^+_{t_n}=P_{\mathcal L_+}+P_{v(t_n)}. 
	\end{equation*}
	Thus
	\begin{equation*}
	[P^+_{t_n}x,x]=\frac{{|[x,v(t)]|}^2}{[v(t_n),v(t_n)]}=\frac{1-t_n}{2(1+t_n)},
	\end{equation*}
	from which we get 
	\begin{equation*}
	{\|{x\|_{J_n}}}^2=[J_nx,x]=2[{P_n}^+x,x]-[x,x]=\frac{2(1-t_n)}{2(1+t_n)}\rightarrow\infty
	\end{equation*}
	if we choose $(t_n)$ such that  $t_n\rightarrow-1$ as $n\rightarrow \infty$. Similarly we get 
	\begin{equation*}
	{\|{y\|_{J_n}}}^2=\frac{1+t_n}{1-t_n}\rightarrow 0
	\end{equation*}
	if we choose $(t_n)$ such that  $t_n\rightarrow-1$ as $n\rightarrow \infty$. Thus we see that $\frac{\|y\|_{J_n}}{\|x\|_{J_n}}\rightarrow0$ as	$n\rightarrow\infty$.\\
%	\begin{que}
% Subcase(ii): $[x,y]=0.$
%	\end{que}
\end{proof}

\begin{example}
	Consider the two dimensional Minkowski space $\mathcal K=\mathbb R^2$ with the inner product $[x,y]=x_1y_1-x_2y_2$ where $x=(x_1,x_2)$, $y=(y_1,y_2)\in \mathbb R^2$. Consider the fundamental decompositions with $\mathcal K_n^+=\text{span}\{(\frac{n+1}{n},\frac{n-1}{n})\}$ and $\mathcal K_n^-=\text{span}\{(\frac{n-1}{n},\frac{n+1}{n})\}$ where $n>1$. Then we get $$\|x\|_{J_n}^2=\frac{1}{4}[(2n+2/n)(x_1^2+y_1^2)+4x_1y_1(1/n-n)].$$	
	Let $y=(1,1)$ and $x=(1,0)$. Then $\|y\|_{J_n}^2=\frac{2}{n}$ and $\|x\|_{J_n}^2=\frac{1}{2}(n+\frac{1}{n})$. Thus $\frac{\|y\|_{J_n}}{\|x\|_{J_n}}\rightarrow0$ as  $n\rightarrow\infty$.
	
\end{example}

\bibliographystyle{spmpsci}
\bibliography{hypo_EP}
%\begin{acknowledgements}
%If you'd like to thank anyone, place your comments here
%and remove the percent signs.
%\end{acknowledgements}

% Authors must disclose all relationships or interests that 
% could have direct or potential influence or impart bias on 
% the work: 
%
% \section*{Conflict of interest}
%
% The authors declare that they have no conflict of interest.

% BibTeX users please use one of
%\bibliographystyle{spbasic}      % basic style, author-year citations
%\bibliographystyle{spmpsci}      % mathematics and physical sciences
%\bibliographystyle{spphys}       % APS-like style for physics
%\bibliography{}   % name your BibTeX data base

% Non-BibTeX users please use

\end{document}